\newcommand{\Z}{\mathbb{Z}}
\newcommand{\Q}{\mathbb{Q}}
\newcommand{\K}{\mathbb{K}}
\newcommand{\Qbar}{\overline{\mathbb{Q}}}
\newcommand{\C}{\mathbb{C}}
\newcommand{\Exc}{\mathrm{Exc}}
\newcommand{\kS}{\mathfrak{S}}
\newcommand{\cL}{\mathcal{L}}
\newcommand{\Ba}{\mathbf{a}}
\newcommand{\Bb}{\mathbf{b}}
\newcommand{\Hyp}{F_{\Ba,\Bb}}
\newcommand{\Si}{\mathrm{Si}}
\newtheorem{thm}{Theorem}[section]
\newtheorem{defn}[thm]{Definition}
\newtheorem{prop}[thm]{Proposition}
\newtheorem{lem}[thm]{Lemma}
\newtheorem{cor}[thm]{Corollary}
\newtheorem{thmx}{Theorem}
\numberwithin{equation}{section}
\author{\'E. Delaygue}
\address{
Univ Lyon, Universit\'e Claude Bernard Lyon 1, CNRS UMR 5208, Institut Camille Jordan, F-69622 Villeurbanne Cedex, France}
\email{delaygue@math.univ-lyon1.fr}
\title{A Lindemann-Weierstrass theorem for $E$-functions}
\date{}
\thanks{This project has received funding from the ANR project De Rerum Natura (ANR-19-CE40-0018).} 
\subjclass{11J72, 11J81, 34M05; 33E30, 34M35.}
\keywords{Linear independence, values of $E$-functions, transcendence, singularities of $G$-functions.}
\begin{document}

\begin{abstract}
$E$-functions were introduced by Siegel in 1929 to generalize Diophantine properties of the exponential function. After developments of Siegel's methods by Shidlovskii, Nesterenko and Andr\'e, Beukers proved in 2006 an optimal result on the algebraic independence of the values of $E$-functions which generalizes the Lindemann-Weierstrass theorem. Since then, it seems that no general result was stated concerning the relations between the values of a single $E$-function. We prove that Andr\'e's theory of $E$-operators and Beuker's result lead to a Lindemann--Weierstrass theorem for $E$-functions in its ``linear independence'' formulation. As a consequence, we show that all transcendental values at algebraic arguments of an entire hypergeometric function are linearly independent over $\Qbar$.
\end{abstract}

\maketitle

\section{Introduction}

Hermite proved in 1873 that $e$ is a transcendental number. Nine years later, Lindemann generalized this result and obtained a statement that contains the transcendence of both numbers $e$ and $\pi$. 

\begin{thmx}[Hermite--Lindemann]\label{thm: HL}
If $\alpha$ is a non-zero algebraic number then $e^\alpha$ is transcendental.
\end{thmx}

 In his original memoir of 1882, Lindemann sketched a more general result which was rigorously proved by Weierstrass and published in 1885.

\begin{thmx}[Lindemann--Weierstrass]\label{thm: LW}
  If $\alpha_1,\dots,\alpha_n$ are algebraic numbers that are linearly independent over $\mathbb{Q}$, then $e^{\alpha_1},\dots,e^{\alpha_n}$ are algebraically independent over $\Q$.
\end{thmx}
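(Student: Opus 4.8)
The plan is to deduce Theorem~\ref{thm: LW} from Beukers' refined form of the Siegel--Shidlovskii theorem, applied to the exponential $E$-functions. I would set $f_0(z)=1$ and $f_i(z)=e^{\alpha_i z}$ for $1\le i\le n$; each $f_i$ is an $E$-function with coefficients in $\Qbar$, and the vector $(f_0,f_1,\dots,f_n)$ solves the first-order differential system $y'=Ay$ with $A=\mathrm{diag}(0,\alpha_1,\dots,\alpha_n)\in M_{n+1}(\Qbar)$. Since $A$ is constant it has no poles, so Beukers' theorem applies at the nonzero algebraic point $\xi=1$ and yields: every homogeneous polynomial relation over $\Qbar$ between the values $f_0(1)=1$ and $f_i(1)=e^{\alpha_i}$ ($1\le i\le n$) is the specialization at $z=1$ of a homogeneous polynomial relation over $\Qbar[z]$ between $f_0,f_1,\dots,f_n$.

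The crux---the one point demanding a hands-on argument---is to check that $f_0,\dots,f_n$ satisfy no nontrivial homogeneous relation over $\Qbar(z)$. I would first show that any $R\in\Qbar[z][X_1,\dots,X_n]$ with $R(z,e^{\alpha_1 z},\dots,e^{\alpha_n z})\equiv 0$ vanishes: writing $R=\sum_{\mathbf m}c_{\mathbf m}(z)X_1^{m_1}\cdots X_n^{m_n}$, the relation becomes $\sum_{\mathbf m}c_{\mathbf m}(z)\,e^{(m_1\alpha_1+\cdots+m_n\alpha_n)z}\equiv 0$, and this is exactly where the hypothesis on the $\alpha_i$ enters. Since $\alpha_1,\dots,\alpha_n$ are $\Q$-linearly independent, the exponents $m_1\alpha_1+\cdots+m_n\alpha_n$ attached to distinct multi-indices $\mathbf m$ are pairwise distinct, and functions $z\mapsto e^{\beta z}$ with pairwise distinct $\beta$ are linearly independent over $\C(z)$ (one may compute their Wronskian, or invoke linear independence of distinct characters); hence every $c_{\mathbf m}$ vanishes and $R=0$. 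Because dehomogenization $X_0\mapsto 1$ is injective on homogeneous polynomials of a fixed degree, it follows that the only homogeneous $Q\in\Qbar[z][X_0,\dots,X_n]$ with $Q(z,1,e^{\alpha_1 z},\dots,e^{\alpha_n z})\equiv 0$ is $Q=0$.

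Assembling these pieces finishes the argument. Suppose, for contradiction, that $e^{\alpha_1},\dots,e^{\alpha_n}$ are algebraically dependent over $\Q$; a fortiori there is a nonzero $P_0\in\Qbar[X_1,\dots,X_n]$ of some degree $d$ with $P_0(e^{\alpha_1},\dots,e^{\alpha_n})=0$. Its homogenization $P(X_0,\dots,X_n)=X_0^{d}P_0(X_1/X_0,\dots,X_n/X_0)$ is nonzero, homogeneous of degree $d$, and satisfies $P(1,e^{\alpha_1},\dots,e^{\alpha_n})=0$. By Beukers' theorem there is a homogeneous $Q\in\Qbar[z][X_0,\dots,X_n]$ with $Q(z,1,e^{\alpha_1 z},\dots,e^{\alpha_n z})\equiv 0$ and $Q(1,X_0,\dots,X_n)=P$; but by the previous paragraph such a $Q$ must be $0$, so $P=Q(1,\cdot)=0$, a contradiction. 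Hence $e^{\alpha_1},\dots,e^{\alpha_n}$ are algebraically independent over $\Q$---in fact over $\Qbar$.

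I expect the only real obstacle to be Beukers' theorem itself, which I would use as a black box; its proof, resting on Siegel's method and Andr\'e's theory of $E$-operators, lies well beyond what one would reprove here. Everything else is elementary---the translation of a functional relation into a vanishing exponential sum, the linear independence of exponentials with pairwise distinct exponents (the single place where $\Q$-linear independence of the $\alpha_i$ is used), and the bookkeeping around homogenization. One could alternatively invoke the classical Siegel--Shidlovskii theorem together with the algebraic independence of $1,e^{\alpha_1 z},\dots,e^{\alpha_n z}$ over $\Qbar(z)$, but phrasing the argument through Beukers' refinement makes the passage from functional to numerical relations transparent and is the form that will generalize to arbitrary $E$-functions in the sequel.
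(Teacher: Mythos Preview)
Your argument is correct. The paper, however, does not supply its own proof of Theorem~\ref{thm: LW}: it is quoted as the classical Lindemann--Weierstrass theorem, with the equivalence to the linear-independence form (Theorem~\ref{thm: LWB}) attributed to Baker. What the paper \emph{does} do is recover Theorem~\ref{thm: LWB} as the special case $f=\exp$ of Corollary~\ref{cor: LWD}, whose proof runs through the general machinery of Theorem~\ref{thm: main}: one builds a differential system using Andr\'e's $E$-operators (singular only at $0$ and $\infty$), applies Beukers' lifting theorem, and then separates the resulting functional relation by analysing the finite singularities of the associated $G$-series through the map $\psi$; for $\exp$ this comes down to $\psi(\exp)(z)=1/(1-z)$ and $\kS(\exp)=\{1\}$.

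Your route is genuinely different and, for the exponential case, more direct: you apply Beukers' theorem to the obvious diagonal system and rule out functional relations by hand, using only the elementary fact that exponentials $e^{\beta z}$ with pairwise distinct $\beta$ are linearly independent over $\C(z)$. This bypasses both Andr\'e's $E$-operator theory and the singularity analysis of $G$-series, but it is tailored to $\exp$ and does not extend to arbitrary $E$-functions. Conversely, the paper's approach trades the ad hoc exponential argument for a uniform mechanism (disjointness of the sets $\kS(f_i)$) that works in far greater generality, though it only yields \emph{linear} independence of values; the passage from Theorem~\ref{thm: LWB} back to the algebraic-independence statement of Theorem~\ref{thm: LW} is left to the classical equivalence.
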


This theorem has the following equivalent ``linear independence'' formulation \cite{Baker}, which is the main motivation of this article.

\begin{thmx}[Lindemann--Weierstrass]\label{thm: LWB}
  If $\alpha_1,\dots,\alpha_n$ are distinct algebraic numbers, then $e^{\alpha_1},\dots,e^{\alpha_n}$ are linearly independent over $\Qbar$.
\end{thmx}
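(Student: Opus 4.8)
The plan is to deduce Theorem~\ref{thm: LWB} from Theorem~\ref{thm: LW}. Two elementary reductions will be used throughout. First, a family of complex numbers is algebraically independent over $\Q$ if and only if it is algebraically independent over $\Qbar$ (from a polynomial relation over $\Qbar$ one obtains one over $\Q$ by multiplying it by its conjugates over $\Q$, since $\Qbar/\Q$ is algebraic and the coefficient field is a finite extension). Second, linear independence over $\Q$ of algebraic numbers is the same as linear independence over $\Z$, which is what will make the monomials appearing below genuinely distinct.

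So suppose, for contradiction, that $\alpha_1,\dots,\alpha_n$ are distinct algebraic numbers and that $\sum_{i=1}^n\beta_i e^{\alpha_i}=0$ for some $\beta_1,\dots,\beta_n\in\Qbar$ not all zero (the case $n=1$ being trivial as $e^{\alpha_1}\neq 0$). First I would pick a maximal $\Q$-linearly independent subfamily of $(\alpha_i)_{1\le i\le n}$; after renumbering, say it is $\alpha_1,\dots,\alpha_m$, so that $\alpha_i=\sum_{j=1}^m c_{ij}\alpha_j$ with $c_{ij}\in\Q$ for every $i$. Let $N$ be a positive integer that is a common denominator of all the $c_{ij}$ and set $\gamma_j:=\alpha_j/N$; then $\gamma_1,\dots,\gamma_m$ are still $\Q$-linearly independent algebraic numbers, $d_{ij}:=Nc_{ij}\in\Z$, and $\alpha_i=\sum_{j=1}^m d_{ij}\gamma_j$, hence $e^{\alpha_i}=\prod_{j=1}^m(e^{\gamma_j})^{d_{ij}}$. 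The hypothetical relation becomes
\[
  \sum_{i=1}^n\beta_i\prod_{j=1}^m\bigl(e^{\gamma_j}\bigr)^{d_{ij}}=0 .
\]
Choosing an integer $D\ge 0$ with $D+d_{ij}\ge 0$ for all $i,j$ and multiplying by $\prod_{j=1}^m(e^{\gamma_j})^{D}\neq 0$, this reads $P(e^{\gamma_1},\dots,e^{\gamma_m})=0$, where $P=\sum_{i=1}^n\beta_i\prod_{j=1}^m X_j^{D+d_{ij}}\in\Qbar[X_1,\dots,X_m]$.

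It then remains to see that $P\neq 0$, and this is exactly the point where distinctness of the $\alpha_i$ enters: if two exponent vectors coincided, $(d_{i1},\dots,d_{im})=(d_{i'1},\dots,d_{i'm})$, then $\alpha_i=\sum_j d_{ij}\gamma_j=\sum_j d_{i'j}\gamma_j=\alpha_{i'}$, forcing $i=i'$; hence the $n$ monomials occurring in $P$ are pairwise distinct, and since the $\beta_i$ are not all zero, $P\neq 0$. Thus $P$ is a nonzero polynomial with algebraic coefficients vanishing at $(e^{\gamma_1},\dots,e^{\gamma_m})$, which contradicts Theorem~\ref{thm: LW} (via the $\Q$-versus-$\Qbar$ remark) since $\gamma_1,\dots,\gamma_m$ are $\Q$-linearly independent algebraic numbers. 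This proves Theorem~\ref{thm: LWB}.

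The argument is essentially bookkeeping and I do not expect a real obstacle; the one step that needs care is the passage from the possibly $\Q$-dependent family $(\alpha_i)$ to the rescaled $\Q$-basis $(\gamma_j)$, where one must clear denominators (the factor $N$) to keep exponents integral while ensuring that distinctness of the $\alpha_i$ is faithfully recorded as distinctness of the exponent vectors $(d_{ij})_j$ — this is precisely what keeps the resulting polynomial relation nontrivial. (The converse implication, Theorem~\ref{thm: LWB}$\Rightarrow$Theorem~\ref{thm: LW}, is even shorter: expanding a putative relation $P(e^{\alpha_1},\dots,e^{\alpha_n})=0$ as $\sum_{\mathbf k}c_{\mathbf k}\,e^{k_1\alpha_1+\dots+k_n\alpha_n}=0$, the $\Q$-linear independence of the $\alpha_i$ makes the exponents pairwise distinct, so Theorem~\ref{thm: LWB} forces all $c_{\mathbf k}=0$; this gives the claimed equivalence.)
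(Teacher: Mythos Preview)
Your argument is correct and is precisely the classical reduction of Theorem~\ref{thm: LWB} to Theorem~\ref{thm: LW}: pass to a $\Q$-basis of the $\Q$-span of the $\alpha_i$, clear denominators so that exponents become integers, and observe that distinctness of the $\alpha_i$ forces distinctness of the resulting monomials; the $\Q$-versus-$\Qbar$ step is handled by a transcendence-degree count. Nothing is missing.

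The paper, however, does not argue this way. It quotes Theorem~\ref{thm: LWB} as classical (referring to \cite{Baker} for the equivalence with Theorem~\ref{thm: LW}) and then \emph{recovers} it as the special case $f=\exp$ of Corollary~\ref{cor: LWD}: since $\psi(\exp)(z)=1/(1-z)$ one has $\kS(\exp)=\{1\}$, so the hypothesis $\alpha_i/\alpha_j\neq\rho_1/\rho_2$ reduces to $\alpha_i\neq\alpha_j$; the exponential is purely transcendental, so no $e^{\alpha_i}$ with $\alpha_i\neq 0$ is algebraic, and the presence of $1$ in the conclusion absorbs the case $\alpha_i=0$. Your route is elementary and self-contained (it only needs Theorem~\ref{thm: LW}), whereas the paper's derivation goes through Beukers' lifting theorem and Andr\'e's theory of $E$-operators --- much heavier, but that is the point: it exhibits Lindemann--Weierstrass as the simplest instance of the paper's general mechanism. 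Your added remark that Theorem~\ref{thm: LWB} conversely implies Theorem~\ref{thm: LW} is also correct and is the equivalence alluded to in the paper's citation of \cite{Baker}.
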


In order to generalize the Lindemann--Weierstrass theorem to other special functions, Siegel introduced in 1929 \cite{Siegel} the notion of \textit{$E$-function}. He proved in particular a result analogous to Theorem \ref{thm: HL} for the Bessel function $J_0$: if $\alpha$ is a non-zero algebraic number then 
$$
J_0(\alpha)=\sum_{n=0}^\infty\frac{(-1)^n}{n!n!}\left(\frac{\alpha}{2}\right)^{2n} 
$$
is transcendental. Siegel also obtained an analogue of Theorem \ref{thm: LW}:
\begin{quote}
  \textit{If $\alpha_1^2,\dots,\alpha_n^2$ are pairwise distinct non-zero algebraic numbers, then $J_0(\alpha_1),\dots,J_0(\alpha_n)$ are algebraically independent over $\Q$}.
\end{quote}
The additional condition $\alpha_i\neq\pm\alpha_j$ is natural for the Bessel function since $J_0(-z)=J_0(z)$.

\begin{defn}\label{defn: E}
  An \textit{$E$-function} is an entire function $f(z)$ that satisfies a non-trivial linear differential equation with coefficients in $\Qbar[z]$ and which has a power series expansion of the form
  $$
  f(z)=\sum_{n=0}^\infty\frac{a_n}{n!}z^n
  $$
  whose coefficients $a_n$ are algebraic numbers and satisfy the following growth condition: letting $d_n\geq 1$ denote, for every $n\geq 0$, the smallest integer such that $d_na_0,\dots,d_na_n$ are algebraic integers, there exists a real number $C>0$ such that, for all $\sigma\in\mathrm{Gal}(\Qbar/\Q)$ and all $n\geq 1$, we have $|\sigma(a_n)|\leq C^n$ and $d_n\leq C^n$.
\end{defn}

Observe that the growth conditions on the $a_n$'s in Definition \ref{defn: E} are equivalent to $h(a_0,\dots,a_n)=O(n)$ for all $n$, where $h$ denotes the absolute logarithmic height. The definition of $E$-functions has been first given by Siegel in 1929 in a more general way. Classical examples of $E$-functions are the exponential, Bessel functions and more generally hypergeometric functions (\textit{e.g.} \cite{ShidlovskiiB}) 
\begin{equation}\label{eq: Hyp}
  \Hyp(z^{s-r}):=\sum_{n=0}^\infty\frac{(a_1)_n\cdots(a_r)_n}{(b_1)_n\cdots(b_s)_n}z^{(s-r)n},
\end{equation}
with integers $s>r\geq 0$, $\Ba=(a_1,\dots,a_r)\in\mathbb{Q}^r$ and $\Bb=(b_1,\dots,b_s)\in(\Q\setminus\mathbb{Z}_{\leq 0})^s$, with the convention, for $r=0$, that an empty product is equal to $1$. Galochkin gave in \cite{Galochkin81} a necessary and sufficient condition for a generalized hypergeometric function to be an $E$-function, which essentially reduces to the rationality of the parameters $a_i$ and $b_j$. Observe that our notation $\Hyp$ slightly differs from the standard notation for generalized hypergeometric functions. The set of $E$-functions forms a $\Qbar[z]$-algebra stable by transformations $f(z)\mapsto f(\lambda z)$ with $\lambda\in\Qbar$. However, the $\Qbar[z]$-algebra generated by hypergeometric $E$-functions does not contain all $E$-functions (see \cite{FJ21}). It is also useful to have in mind that an $E$-function is either a polynomial or transcendental over $\C(z)$.
\medskip

The methods of Siegel were successively improved by Shidlovskii \cite{Shidlovskii}, Nesterenko--Shidlovskii \cite{NS96}, Andr\'e \cite{AndreI}  and Beukers \cite{Beukers} to finally reach in 2006 the following deep result.

\begin{thmx}[Beukers \cite{Beukers}]\label{thm: Beukers}
  Let $Y=(f_1,\dots,f_n)^\top$ be a vector of $E$-functions satisfying a system of first order equations $Y'=AY$ where $A$ is an $\medmuskip=0mu n\times n$-matrix with entries in $\Qbar(z)$. Denote the common denominator of the entries of $A$ by $T(z)$. Let $\alpha$ be an algebraic number satisfying $\alpha T(\alpha)\neq 0$. Then, for any homogeneous polynomial $P\in\Qbar[X_1,\dots,X_n]$ such that $P(f_1(\alpha),\dots,f_n(\alpha))=0$, there exists a polynomial $Q\in\Qbar[Z,X_1,\dots,X_n]$, homogeneous in the variables $X_1,\dots,X_n$, such that $Q(\alpha,X_1,\dots,X_n)=P(X_1,\dots,X_n)$ and $Q(z,f_1(z),\dots,f_n(z))=0$.
\end{thmx}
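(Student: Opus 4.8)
The plan is to reduce Theorem~\ref{thm: Beukers}, in two soft steps, to a clean assertion about $\Qbar$-linear relations among values of $E$-functions at a non-singular algebraic point, and then to prove that assertion by combining the Siegel--Shidlovskii theorem with Andr\'e's structural results on $E$-operators.

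\emph{Reduction to linear relations.} Fix the degree $m\geq 1$ of $P$ and replace $f_1,\dots,f_n$ by the family of all monomials $\prod_{i=1}^n f_i^{\beta_i}$ with $|\beta|=m$. These are again $E$-functions, and since the derivative of such a monomial is a $\Z$-linear combination of degree-$m$ monomials with coefficients among the entries of $A$, the new family again satisfies a first order system with coefficients in $\Qbar(z)$ and the same denominator $T$. Under this substitution, a homogeneous $P$ of degree $m$ with $P(f_1(\alpha),\dots,f_n(\alpha))=0$ becomes precisely a $\Qbar$-linear relation among the values at $\alpha$ of these monomials, and a $Q$ of the shape demanded by the theorem becomes precisely a $\Qbar(z)$-linear relation among the monomials whose coefficients, cleared of denominators (harmless since $T(\alpha)\neq 0$) and rescaled, specialise to those of $P$ at $z=\alpha$. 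So it suffices to prove: for $E$-functions $g_1,\dots,g_M$ satisfying a first order system with denominator $S$, and for $\alpha$ with $\alpha S(\alpha)\neq 0$, every $\Qbar$-linear relation among $g_1(\alpha),\dots,g_M(\alpha)$ is the value at $\alpha$ of a $\Qbar(z)$-linear relation among $g_1(z),\dots,g_M(z)$. Passing to a $\Qbar(z)$-basis of the span of the $g_j$ reduces this even further: the rational functions expressing the remaining $g_j$ through the basis solve a first order linear system whose finite singularities are among those of $Y'=BY$, hence among the zeros of $S$, so --- being given near $\alpha$ by convergent power series --- they have no pole at $\alpha$; thus it is enough to show that $\Qbar(z)$-linear independence of $g_1,\dots,g_M$ forces $\Qbar$-linear independence of $g_1(\alpha),\dots,g_M(\alpha)$ whenever $\alpha S(\alpha)\neq 0$.

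\emph{The main step.} In its known (Nesterenko--Shidlovskii) form the Siegel--Shidlovskii theorem already yields this assertion for all $\alpha$ outside a \emph{finite} exceptional set --- essentially the zeros of a determinant formed from the polynomials appearing in an auxiliary family of Pad\'e-type approximation forms, together with $0$ and the zeros of $S$; equivalently, outside the finitely many $\alpha$ at which the fibre over $\alpha$ of the Zariski closure of $\{(z,g_1(z),\dots,g_M(z))\}$ degenerates. Discarding these extra exceptions --- showing that no degeneration occurs as soon as $\alpha S(\alpha)\neq 0$ --- is the genuinely new point and the step I expect to be the crux. For this I would invoke Andr\'e's theory of $E$-operators: each $g_j$ is killed by an $E$-operator, $E$-operators are Fourier--Laplace transforms of $G$-operators, and $G$-operators have only finitely many singularities, all of them regular singular; dually an $E$-operator has $0$ as its unique singularity at finite distance, with rational exponents there, while $Y'=BY$ has all its finite singularities among the zeros of $S$ and the $g_j$ are entire, so there is no monodromy at finite distance. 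The work is to feed this rigidity into the approximation construction --- or, dually, into the monodromy analysis of the companion $G$-functions, whose singular locus is correspondingly confined --- so as to force the auxiliary determinant to be nonzero, equivalently the fibre to be non-degenerate, at every $\alpha$ with $\alpha S(\alpha)\neq 0$. Granting this, the $\Qbar$-linear relations at $\alpha$ coincide with the specialisations at $\alpha$ of the $\Qbar(z)$-linear relations among the functions, which is the clean assertion; unwinding the two reductions then proves Theorem~\ref{thm: Beukers}. The main obstacle is precisely this confinement of the exceptional set: the reductions and the Siegel--Shidlovskii input are comparatively soft, whereas controlling \emph{every} admissible $\alpha$ forces one to use the fine arithmetic--analytic structure of $E$-functions --- Andr\'e's $E$-operators and the location of the singularities of the associated $G$-functions --- rather than merely the growth conditions together with holonomicity.
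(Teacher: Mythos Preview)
The paper does not prove Theorem~\ref{thm: Beukers}: it is quoted as Beukers' 2006 result \cite{Beukers} and invoked as a black box in the proof of Theorem~\ref{thm: main}. There is no proof in the paper to compare your proposal against.

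That said, your sketch points in the right direction relative to Beukers' own argument. The reduction from degree-$m$ homogeneous relations to linear ones via the symmetric-power system is standard and correct, and you correctly locate the substance in upgrading the Nesterenko--Shidlovskii conclusion (valid for all but finitely many $\alpha$) to every $\alpha$ with $\alpha T(\alpha)\neq 0$, with Andr\'e's $E$-operator theory as the decisive input. However, your ``main step'' is a plan rather than a proof: saying that ``the work is to feed this rigidity into the approximation construction \dots\ so as to force the auxiliary determinant to be nonzero'' describes a hope, not a mechanism. In Beukers' actual proof the route is more concrete and does not proceed by whittling down an unspecified exceptional set from Nesterenko--Shidlovskii: one first uses Andr\'e to pass to a system whose only finite singularity is $0$ and in which the given $E$-functions sit inside an explicit fundamental matrix of solutions of a controlled shape; a determinant/Wronskian identity then shows, by a direct linear-algebra computation, that any $\Qbar$-linear relation at a nonzero $\alpha$ lifts to a $\Qbar(z)$-linear relation. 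So the ingredients you name are the right ones, but the proposal has a genuine gap exactly where you yourself flag the crux.
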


Andr\'e \cite{AndreB} generalized Theorem~\ref{thm: Beukers} to $E$-functions in Siegel's sense, see also \cite{Lepetit}. Since then, it seems that no general result was stated concerning the relations between the values of a single $E$-function. As in the case of the exponential function, we can separate the diophantine results on the values of a given $E$-function $f$ into two steps. 

First, we consider in Section \ref{sec: Hermite-Lindemann} an analogue of Theorem \ref{thm: HL}, that is to determine the algebraic numbers $\alpha$ such that $f(\alpha)$ is algebraic. This problem was completely solved by Adamczewski and Rivoal in \cite{AR18}. 

Then, being given algebraic points $\alpha_1,\dots,\alpha_n$ such that $f(\alpha_i)$ is transcendental, $1\leq i\leq n$, we use Beukers' lifting result and Andr\'e's theory of $E$-operators to prove a criterion for the linear independence over $\Qbar$ of the values $1,f(\alpha_1),\dots,f(\alpha_n)$. It generalizes Theorem \ref{thm: LWB} to $E$-functions. 
\medskip

To every $E$-function $f$ corresponds a $G$-series $\psi(f)$ defined formally on the Taylor expansions at $0$ by
$$
\psi\left(\sum_{n=0}^\infty\frac{a_n}{n!}z^n\right)=\sum_{n=0}^\infty a_nz^n.
$$
As pointed out by Andr\'e in \cite{AndreI}, the operator $\psi$ can be obtained \textit{via} the Laplace transform. If $f$ is an $E$-function, then there exists a positive constant $\rho$ such that $|f(z)|=O(e^{\rho|z|})$. Hence the Laplace transform $f^+(z)=\int_0^\infty f(w)e^{-wz}\mathrm{d}w$ is well-defined for $\Re(z)>\rho$. A standard calculation shows that $\psi(f)(z)=\frac{1}{z}f^+(\frac{1}{z})$. The series $\psi(f)$ has a positive radius of convergence and satisfies a non-trivial linear differential equation over $\Qbar[z]$. It follows that $\psi(f)$ has finitely many singularities at finite distance, the set of which we denote by $\kS(f)$. Our main result is the following.

\begin{thm}\label{thm: main}
  Let $f_1,\dots,f_n$ be $E$-functions with pairwise disjoint sets $\mathfrak{S}(f_i)$. Let $\alpha$ be an algebraic number such that $f_i(\alpha)$ is transcendental for all $i$, $1\leq i\leq n$. Then the numbers $1, f_1(\alpha),\dots,f_n(\alpha)$ are linearly independent over $\Qbar$.
\end{thm}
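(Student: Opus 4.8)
The plan is to combine Beukers' lifting theorem (Theorem~\ref{thm: Beukers}) with the operator $\psi$ and the localisation of singularities of $G$-functions. Two reductions first. If $\alpha=0$, then all of $1,f_1(0),\dots,f_n(0)$ are algebraic and there is nothing to prove, so assume $\alpha\neq0$. Suppose, for contradiction, that $1,f_1(\alpha),\dots,f_n(\alpha)$ are linearly dependent over $\Qbar$, say
$\beta_0+\sum_{i=1}^n\beta_i f_i(\alpha)=0$ with $\beta_0,\dots,\beta_n\in\Qbar$ not all zero; then $\beta_{i_0}\neq0$ for some $i_0\in\{1,\dots,n\}$, for otherwise $\beta_0=0$ too. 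We will show that $f_{i_0}(\alpha)\in\Qbar$; together with $1$ this produces two algebraic numbers in the list, which is exactly the exception allowed by the statement, so this establishes the contrapositive of the theorem.

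To run Beukers' theorem, choose for each $i$ an $E$-operator $L_i\in\Qbar[z]\bigl[\frac{d}{dz}\bigr]$ annihilating $f_i$ (André's theory of $E$-operators): its finite singularities are concentrated at $0$, so its leading coefficient is a power of $z$. Set $d_i=\mathrm{ord}(L_i)$. Since derivatives of $E$-functions are $E$-functions, the vector $Y=\bigl(1,(f_i^{(j)})_{1\le i\le n,\ 0\le j\le d_i-1}\bigr)^\top$ is a vector of $E$-functions satisfying a system $Y'=AY$ with $A$ having entries in $\Qbar(z)$ whose common denominator $T$ divides a power of $z$; in particular $\alpha T(\alpha)\neq0$. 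The linear form $P=\beta_0X_0+\sum_i\beta_iX_{i,0}$ is homogeneous and vanishes at $Y(\alpha)$, so Theorem~\ref{thm: Beukers} provides polynomials $q_0,(q_{i,j})\in\Qbar[z]$, not all zero, with $q_0(\alpha)=\beta_0$, $q_{i,0}(\alpha)=\beta_i$, $q_{i,j}(\alpha)=0$ for $j\ge1$, and such that, identically in $z$,
$$q_0(z)+\sum_{i=1}^n R_i(z)=0,\qquad\text{where }\ R_i:=\sum_{j=0}^{d_i-1}q_{i,j}(z)f_i^{(j)}(z).$$
Each $R_i$ is an $E$-function, and specialising at $z=\alpha$ gives $R_i(\alpha)=\beta_i f_i(\alpha)$. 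Hence it suffices to prove that every $R_i$ is a polynomial: then $f_{i_0}(\alpha)=R_{i_0}(\alpha)/\beta_{i_0}\in\Qbar$ and we are done.

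Here the hypothesis on the sets $\mathfrak{S}(f_i)$ enters. Because $\psi$ intertwines the Euler operator $z\frac{d}{dz}$ with itself and turns multiplication by $z$ into a first-order operator, forming $\Qbar[z]$-combinations of the derivatives $f_i^{(j)}$ introduces no new singularities on the $G$-function side; thus $\mathfrak{S}(R_i)\subseteq\mathfrak{S}(f_i)$, and the sets $\mathfrak{S}(R_i)$ are pairwise disjoint. Applying the $\C$-linear map $\psi$ to the identity above gives $\sum_{i=1}^n\psi(R_i)=-\psi(q_0)$, a polynomial. If $\psi(R_{i_0})$ were singular at some $\sigma\in\mathfrak{S}(R_{i_0})$, then, $\sigma$ being absent from every $\mathfrak{S}(R_i)$ with $i\neq i_0$, the identity $\psi(R_{i_0})=-\psi(q_0)-\sum_{i\neq i_0}\psi(R_i)$ would force $\psi(R_{i_0})$ to be regular at $\sigma$, a contradiction. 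So every $\psi(R_i)$ has empty singularity set, hence is entire; an entire $G$-function is a polynomial (a Liouville estimate on its coefficients, which are algebraic numbers with conjugates and denominators of at most geometric growth), so $\psi(R_i)$, and therefore $R_i$, is a polynomial with coefficients in $\Qbar$.

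The points I expect to require genuine work are: (i) making the ``separation at singularities'' step rigorous, i.e.\ controlling the multivaluedness of the analytic continuations of the $\psi(R_i)$ so that cancellation of a singularity is genuinely impossible — this is precisely where one needs the fine structure of the finite singularities of $G$-functions, and where the disjointness of the $\mathfrak{S}(f_i)$ is unavoidable, since it encodes the obstruction coming from functions such as $L_1(z)f(z/\alpha_1)+\dots+L_n(z)f(z/\alpha_n)$ described in the introduction; and (ii) the bookkeeping needed to apply Theorem~\ref{thm: Beukers}, namely the existence of an annihilating $E$-operator with leading term a power of $z$ and the inclusion $\mathfrak{S}(R_i)\subseteq\mathfrak{S}(f_i)$ via the compatibility of $\psi$ with $z\frac{d}{dz}$.
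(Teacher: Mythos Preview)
Your proposal is correct and follows essentially the same route as the paper: lift the numerical relation via Beukers' theorem using a system built from Andr\'e's $E$-operators (so that $\alpha$ is non-singular), group the lifted relation as $\sum_i R_i=-q_0$ with $R_i$ a $\Qbar[z]$-combination of $f_i$ and its derivatives, apply $\psi$, use $\kS(R_i)\subseteq\kS(f_i)$ and disjointness to force each $\psi(R_i)$ to be a polynomial, hence each $R_i\in\Qbar[z]$, and conclude $f_{i_0}(\alpha)\in\Qbar$. Your worry (i) about multivaluedness is unnecessary: the argument takes place entirely at the level of $G$-series as germs at $0$, where ``same germ $\Rightarrow$ same singularity set'' and ``singularities of a sum lie in the union'' already give the separation; the paper handles (ii) exactly as you suggest, via the formulas $\psi(zf)=(z^2\tfrac{d}{dz}+z)\psi(f)$ and $\psi(f')=(\psi(f)-f(0))/z$.
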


The conclusion of Theorem \ref{thm: main} can be rephrased as follows: any non-trivial $\Qbar$-linear combination of $f_1(\alpha),\dots,f_n(\alpha)$ is a transcendental number. We obtain a generalization of Theorem~\ref{thm: LWB} by setting $f_1=\dots=f_n$ in Theorem \ref{thm: main}.

\begin{cor}\label{cor: LWD}
  Let $f$ be an $E$-function. Let $\alpha_1,\dots,\alpha_n$ be non-zero algebraic numbers such that, for all $i\neq j$, $f(\alpha_i)$ is transcendental and $\alpha_i/\alpha_j\neq\rho_1/\rho_2$ for all $\rho_1,\rho_2\in\kS(f)$. Then the numbers $1,f(\alpha_1),\dots,f(\alpha_n)$ are linearly independent over $\Qbar$.
\end{cor}

In the case of the exponential function, we have $\psi(\exp)=1/(1-z)$ so $\kS(\exp)=\{1\}$. The non-nullity assumption on the $\alpha_i$'s in Corollary \ref{cor: LWD} is counterbalanced by the presence of $1$ in the conclusion. Using Theorem \ref{thm: HL}, we retrieve the linear independence of exponentials evaluated at distinct algebraic numbers. 

As an application, we obtain the following new diophantine result on values of hypergeometric series.

\begin{thm}\label{thm: HyperLW}
Let $F_{\Ba,\Bb}$ be an entire hypergeometric function, with $\Ba\in\Q^{r}$, $\Bb\in(\Q\setminus\Z_{\leq 0})^{s}$, where $r$ and $s$ are integers satisfying $s>r\geq 0$. Let $\alpha_1,\dots,\alpha_n$ be pairwise distinct algebraic numbers such that, for all $i$, $1\leq i\leq n$, $F_{\Ba,\Bb}(\alpha_i)$ is transcendental. Then the numbers $1, F_{\Ba,\Bb}(\alpha_1),\dots,F_{\Ba,\Bb}(\alpha_n)$ are linearly independent over $\Qbar$.
\end{thm}

Theorem \ref{thm: HyperLW} is a corollary of Theorem \ref{thm: Hyp2} stated in Section \ref{sec: applications}. Observe that the exponential function corresponds to the parameters $r=0$, $s=1$ and $\Bb=(1)$.

\section{Diophantine properties of the values of $E$-functions}

\subsection{Hermite--Lindemann and $E$-functions}\label{sec: Hermite-Lindemann}

In contrast to $\exp$ and $J_0$, a transcendental $E$-function $f$ can take algebraic values at non-zero algebraic arguments. We call \textit{exceptional values} those non-zero algebraic numbers $\alpha$ such that $f(\alpha)\in\Qbar$. For example \cite{BRS22}, for every $a\in\Q\setminus\Z_{\leq 0}$ and every positive integer $d$, the transcendental hypergeometric $E$-function
$$
\sum_{n=0}^\infty\binom{n+d}{d}\frac{1}{(a+d+1)_n}z^n
$$
has $d$ exceptional values. The set of exceptional values of an entire function $f$ is denoted by $\Exc(f)$.  We say that $f$ is \textit{purely transcendental} if it has no exceptional value.

Since $f$ satisfies a non-trivial linear differential equation over $\Qbar[z]$, there exists a number field $\mathbb{K}$ which contains all its Taylor coefficients. Based on Beukers' lifting result, one can show \cite{FR16} that for any exceptional value $\alpha$, $f(\alpha)\in\K(\alpha)$. More precisely, Bostan, Rivoal and Salvy proved (see \cite{RnoP} and \cite{BRS22}) the following \textit{canonical decomposition}: every transcendental $E$-function can be written in a unique way as $f=p+qg$ with $p,q\in\Qbar[z]$, $q$ monic and $q(0)\neq 0$, $\deg(p)<\deg(q)$ and $g$ is a purely transcendental $E$-function. In particular, the exceptional values of $f$ are the roots of $q$. A result analogous to Theorem \ref{thm: HL} for $f$ thus comes down to finding its canonical decomposition or at least the roots of $q$.

By applying Beukers' lifting result to the system associated with the minimal inhomogeneous differential equation for $f$, Adamczewski and Rivoal \cite{AR18} proved that there exists an algorithm to perform the following tasks. Given an $E$-function $f$ as input, it first says whether $f$ is transcendental or not. If it is transcendental, it then outputs $\Exc(f)$. The input of the algorithm is a differential equation together with sufficiently many initial conditions to define the $E$-function $f$, but an oracle has to guarantee that $f$ is indeed an $E$-function. This algorithm was recently improved in \cite{BRS22}. Together with Rivoal's notes \cite{RnoP}, this yields an algorithm to compute the canonical decomposition of any given $E$-function.

\subsection{A Lindemann--Weierstrass theorem for $E$-functions}

As explained above, the first obstruction to generalize Theorem \ref{thm: LWB} is that at most one of the algebraic numbers $\alpha_1,\dots,\alpha_n$ can be an exceptional value for $f$. The second obstruction is that an $E$-function may satisfy difference equations such as $J_0(-x)=J_0(x)$ or $\sin(-x)=-\sin(x)$, so we must avoid the relations $\alpha_i\neq\pm\alpha_j$ in those cases. More generally, given any $E$-function $f$ and any non-zero distinct algebraic numbers $\alpha_1,\dots,\alpha_n$, we can construct an $E$-function
$$
g(z)=L_1(z)f\left(\frac{z}{\alpha_1}\right)+\cdots+L_n(z)f\left(\frac{z}{\alpha_n}\right),
$$
where $L_i$ is the Lagrange polynomial of degree $n$ such that $L_i(\alpha_j)$ is the Kronecker delta $\delta_{i j}$. It follows that $g(\alpha_1)=\cdots=g(\alpha_n)=f(1)$. As Theorem \ref{thm: main} shows, this situation is in fact encoded by the singularities of the underlying $G$-functions.
\medskip

Under the assumtions of Theorem \ref{thm: main}, it is not hard to prove that the $E$-functions $f_1,\dots,f_n$ are $\Qbar(z)$-linearly independent unless two of them are polynomials. However, to naively prove the $\Qbar$-linear independence of the values $f_1(\alpha),\dots,f_n(\alpha)$ with Beukers' lifting result, one has to construct an appropriate differential system which possibly brings new functions or has non-zero singularities. Somehow, Theorem \ref{thm: main} shows that those difficulties can essentially be overcome.   

As a corollary, we obtain the following result on the linear independence of the values of $E$-functions evaluated at different algebraic arguments.

\begin{cor}\label{cor: LWDn}
  Let $f_1,\dots,f_n$ be $E$-functions. Let $\alpha_1,\dots,\alpha_n$ be non-zero algebraic numbers such that, for all $i\neq j$, $f_i(\alpha_i)$ is transcendental and  $\alpha_i/\alpha_j\neq \rho_1/\rho_2$ for all $\rho_1\in\kS(f_i)$, $\rho_2\in\kS(f_j)$. Then the numbers $1, f_1(\alpha_1),\dots,f_n(\alpha_n)$ are linearly independent over $\Qbar$.
\end{cor}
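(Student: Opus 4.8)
\textbf{Proof proposal for Corollary~\ref{cor: LWDn}.} The plan is to reduce to Theorem~\ref{thm: main} by an invertible change of variable. For each $i$, set $g_i(z):=f_i(\alpha_i z)$. Since the set of $E$-functions is stable under the homotheties $f(z)\mapsto f(\lambda z)$ with $\lambda\in\Qbar$ and each $\alpha_i$ is a non-zero algebraic number, every $g_i$ is again an $E$-function, and $g_i(1)=f_i(\alpha_i)$. Thus it suffices to apply Theorem~\ref{thm: main} to $g_1,\dots,g_n$ at the algebraic point $\alpha=1$, provided we check that the sets $\kS(g_i)$ are pairwise disjoint.

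The key computation relates $\kS(g_i)$ to $\kS(f_i)$. Writing $f_i(z)=\sum_{m\geq 0}\frac{a_m^{(i)}}{m!}z^m$, we get $g_i(z)=\sum_{m\geq 0}\frac{a_m^{(i)}\alpha_i^m}{m!}z^m$, hence by the very definition of $\psi$,
\[
\psi(g_i)(z)=\sum_{m\geq 0}a_m^{(i)}\alpha_i^m z^m=\psi(f_i)(\alpha_i z).
\]
Because $\alpha_i\neq 0$, a point $z_0$ is a singularity of $\psi(g_i)$ if and only if $\alpha_i z_0$ is a singularity of $\psi(f_i)$; therefore $\kS(g_i)=\alpha_i^{-1}\kS(f_i)=\{\rho/\alpha_i:\rho\in\kS(f_i)\}$ (note that $0\notin\kS(f_i)$, as $\psi(f_i)$ is holomorphic at the origin, so these quotients make sense). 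Consequently, for $i\neq j$, a common element of $\kS(g_i)$ and $\kS(g_j)$ would be of the form $\rho_1/\alpha_i=\rho_2/\alpha_j$ with $\rho_1\in\kS(f_i)$ and $\rho_2\in\kS(f_j)$, that is $\alpha_i/\alpha_j=\rho_1/\rho_2$ — which is exactly what the hypothesis of the corollary forbids. Hence the $\kS(g_i)$ are pairwise disjoint (in particular the case $\kS(f_i)=\varnothing$ is covered trivially).

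Applying Theorem~\ref{thm: main} to $g_1,\dots,g_n$ at $\alpha=1$ then yields that $1,g_1(1),\dots,g_n(1)=1,f_1(\alpha_1),\dots,f_n(\alpha_n)$ are linearly independent over $\Qbar$ unless two of them are algebraic, which is precisely the statement of the corollary.

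As this is a direct reduction, there is no serious obstacle; the only step requiring a little care is the identity $\kS(f_i(\alpha_i z))=\alpha_i^{-1}\kS(f_i)$, which rests on the commutation $\psi(f(\lambda z))=\psi(f)(\lambda z)$ read off from the definition of $\psi$, together with the elementary fact that an invertible scaling of the variable transports the singular locus of an analytic function accordingly.
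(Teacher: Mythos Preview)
Your proof is correct and follows exactly the same route as the paper: define $g_i(z)=f_i(\alpha_i z)$, use $\psi(g_i)(z)=\psi(f_i)(\alpha_i z)$ to get $\kS(g_i)=\alpha_i^{-1}\kS(f_i)$, check pairwise disjointness from the hypothesis, and apply Theorem~\ref{thm: main} at $\alpha=1$. Your write-up is slightly more detailed (the power-series justification of $\psi(f(\lambda z))=\psi(f)(\lambda z)$ and the remark that $0\notin\kS(f_i)$), but the argument is identical.
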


\begin{proof}[Proof of Corollary \ref{cor: LWDn}]
  We set $g_i(z):=f_i(\alpha_iz)$ for all $i$. We also have $\psi(g_i)(z)=\psi(f_i)(\alpha_iz)$ so $\kS(g_i)$ is the set of the numbers $\rho/\alpha_i$ for $\rho$ in $\kS(f_i)$. If $\rho_1\in\kS(f_i)$ and $\rho_2\in\kS(f_j)$, then $\rho_1/\alpha_i\neq\rho_2/\alpha_j$ by hypothesis. In particular the sets $\kS(g_1),\dots,\kS(g_n)$ are pairwise disjoint and we can apply Theorem \ref{thm: main} to $g_1,\dots,g_n$ with $\alpha=1$.
\end{proof}

We obtain the generalization of Theorem \ref{thm: LWB}, Corollary \ref{cor: LWD} above, by setting $f_1=\dots=f_n$ in Corollary \ref{cor: LWDn}.
\medskip

Let us illustrate Corollary \ref{cor: LWD} with another example. By Siegel's theorem, the Bessel function $J_0$ is purely transcendental and we have 
$$
\psi(J_0)=\sum_{n=0}^\infty\binom{2n}{n}\left(\frac{-z^2}{4}\right)^n=\sum_{n=0}^\infty\frac{(1/2)_n}{n!}\big(-z^2\big)^n=\frac{1}{\sqrt{1+z^2}},
$$
so $\kS(J_0)=\{-i,i\}$. We retrieve the $\Qbar$-linear independence of $J_0(\alpha_1),\dots,J_0(\alpha_n)$ when $\alpha_i\neq\pm\alpha_j$. We describe in Section \ref{sec: applications} several applications of Theorem \ref{thm: main} and its corollaries, in particular we obtain that the transcendental values at algebraic arguments of an entire hypergeometric $E$-function are linearly independent over $\Qbar$.

We prove in Section \ref{sec: pull} another straightforward consequence of Corollary \ref{cor: LWDn}: given transcendental $E$-functions $f_1,\dots,f_n$, and non-zero algebraic functions $\mu_1,\dots,\mu_n,\eta_1,\dots,\eta_n$ over $\Qbar(z)$ such that the $\eta_i$'s are non-constant and pairwise linearly independent over $\Qbar$, then the function
$$
\mu_1\cdot (f_1\circ\eta_1)+\cdots+\mu_n\cdot (f_n\circ\eta_n)
$$
is transcendental over $\Qbar(z)$. This is a broad generalization of the fact that $E$-functions cannot be solutions of non-trivial linear difference equations over $\Qbar(z)$ associated with a Mahler or a shift operator (\textit{e.g.} \cite{ADH21}).

\section{Proof of Theorem \ref{thm: main}}

Besides the main tools of the proof of Theorem \ref{thm: main} that are Beukers' lifting result and Andr\'e's theory of $E$-operators, we shall use the following two lemmas.

\begin{lem}\label{lem: sing dif}
Let $f$ be an $E$-function and $\cL$ a linear differential operator with coefficients in $\Qbar[z]$. Then the singularities at finite distance of $\psi(\cL f)$ are singularities of $\psi(f)$, that is $\kS(\cL f)\subseteq \kS(f)$.
\end{lem}

\begin{proof}
First observe that if $f_1$ and $f_2$ are two $E$-functions, then $\kS(f_1+f_2)$ is a subset of $\kS(f_1)\cup\kS(f_2)$. So we can reduce the proof to the case where $\mathcal{L}=\beta z^k(d/dz)^\ell$ with $\beta\in\Qbar$ and $k,\ell\in\Z_{\geq 0}$. Hence it suffices to prove the lemma for the differential operators defined by $\beta\in\overline{\mathbb{Q}}$, $z$ and $\frac{d}{dz}$.
Calculations give the useful formulas (\textit{e.g.} \cite[Section 2]{AndreI})
$$
\psi(zf(z))=\left(z^2\frac{d}{dz}+z\right)\psi(f)\quad\textup{and}\quad \psi\left(\frac{d}{dz}f(z)\right)=\frac{\psi(f)(z)-f(0)}{z}.
$$
In all cases, we see that the singularities at finite distance of $\psi(\mathcal{L} f)$ are singularities of $\psi(f)$, so $\kS(\mathcal{L}f)\subseteq\kS(f)$. 
\end{proof}

The proof of Theorem \ref{thm: main} rests on the following well-known result.

\begin{lem}\label{lem: no sing is pol}
A $G$-function with no singularity at finite distance is a polynomial function with coefficients in $\Qbar$.
\end{lem}

In the case of $G$-functions in the strict sense, that are series $\psi(f)$ with $f$ an $E$-function as defined by Definition \ref{defn: E}, Lemma \ref{lem: no sing is pol} can be demonstrated directly using the product formula in number fields. However, there is a more elaborated proof for the general definition given by Siegel in 1929, which we present below for the sake of completeness.

\begin{proof}
Let $f$ be a $G$-function with no singularity at finite distance. Hence $f$ has an infinite radius of convergence and it is an entire function. Let $\cL$ be the minimal linear differential operator annihilating $f$. By Lepetit's result \cite[Th\'{e}or\`{e}me 1]{Lepetit}, which is a generalization of the Andr\'{e}--Chudnovsky--Katz theorem for $G$-functions in Siegel's original sense, every singularity of $\cL$ in $\mathbb{P}^1(\mathbb{C})$ is a regular singularity. In particular, $f$ has at most polynomial growth at infinity by Fuch's criterion. But an entire function with polynomial growth is a polynomial by Liouville's theorem, so $f$ is a polynomial. 
\end{proof}

We can now prove our main result.

\begin{proof}[Proof of Theorem \ref{thm: main}]
Let $f_1,\dots,f_n$ be $E$-functions such that, for all distinct $i$ and $j$, the sets $\kS(f_i)$ and $\kS(f_j)$ are disjoint. Let $\alpha$ be a fixed non-zero algebraic number such that $f_1(\alpha),\dots,f_n(\alpha)$ are transcendental numbers. Let us assume that there is a $\overline{\mathbb{Q}}$-linear relation between the numbers $1,f_1(\alpha),\dots, f_n(\alpha)$ given by
\begin{equation}\label{eq: LinRel}
\lambda_0+\lambda_1f_1(\alpha)+\cdots+\lambda_nf_n(\alpha)=0.
\end{equation}
We shall show that this relation is trivial, that is $\lambda_0=\cdots=\lambda_n=0$.
\medskip

We write $f_0$ for the constant $E$-function equal to $1$. Let $i$ be fixed in $\{0,\dots,n\}$. By the results of Andr\'e on $E$-operators \cite{AndreI}, there exists a differential operator $L_i$ annihilating $f_i$ and with only $0$ and $\infty$ as singularities. We write $s_i$ for the differential order of $L_i$ and $Y_i=(f_i,\dots,f_i^{(s_i-1)})^\top$. It follows that $Y_i$ is solution of a differential system $Y'=A_iY$ where $A_i$ is a matrix with entries in $\Qbar[z,1/z]$. The direct sum of those systems gives a system $Y'=AY$, where the entries of $A$ belong to $\Qbar[z,1/z]$, and with the concatenation of the $Y_i$'s as a vector solution of $E$-functions. We consider the polynomial $P$ in $\Qbar[(X_{i,j})_{0\leq i\leq n,0\leq j< s_i}]$ given by
$$
P((X_{i,j})_{0\leq i\leq n,0\leq j< s_i})=\lambda_0X_{0,0}+\lambda_1X_{1,0}+\cdots+\lambda_nX_{n,0},
$$
so that $P((f_i^{(j)}(\alpha))_{0\leq i\leq n,0\leq j<s_i})=0$.
 Since $\alpha$ is non-zero, it is not a singularity of $A$ and we can apply Theorem \ref{thm: Beukers}: there exists a polynomial $Q$ in $\Qbar[Z,(X_{i,j})_{0\leq i\leq n,0\leq j<s_i}]$, linear in the $X_{i,j}$'s, such that
$$
Q(z,(f_i^{(j)}(z))_{0\leq i\leq n,0\leq j<s_i})=0
$$
and  
$$
Q(\alpha,(X_{i,j})_{0\leq i\leq n,0\leq j< s_i})=\lambda_0X_{0,0}+\lambda_1X_{1,0}+\cdots+\lambda_nX_{n,0}.
$$
This means that there are polynomials $R_{i,j}(z)\in\Qbar[z]$, $0\leq i\leq n$, $0\leq j< s_i$, such that
$$
\sum_{i=0}^n\sum_{j=0}^{s_i-1}R_{i,j}f_i^{(j)}=0,
$$
and which satisfy $R_{i,0}(\alpha)=\lambda_i$ and $R_{i,j}(\alpha)=0$ for all $i$ in $\{0,\dots n\}$ and all $j\geq 1$. For every $i$ in $\{0,\dots,n\}$, we consider a linear differential operator $\mathcal{L}_i$ with coefficients in $\Qbar[z]$ satisfying
$$
\mathcal{L}_if_i=\sum_{j=0}^{s_i-1}R_{i,j}f_i^{(j)}.
$$
It follows that $\mathcal{L}_0f_0+\dots+\mathcal{L}_nf_n= 0$, with $(\mathcal{L}_if_i)(\alpha)=\lambda_if_i(\alpha)$ for all $i$, $0\leq i\leq n$. Since $\psi$ is clearly additive, we also have
$$
\psi(\mathcal{L}_0f_0)+\dots+\psi(\mathcal{L}_nf_n)= 0.
$$

By Lemma \ref{lem: sing dif}, for every $i$ in $\{0,\dots,n\}$, the singularities at finite distance of $\psi(\mathcal{L}_if_i)$ are singularities of $\psi(f_i)$, so $\kS(\mathcal{L}_if_i)\subseteq\kS(f_i)$. It follows that for all distinct $i$ and $j$, the sets $\kS(\mathcal{L}_if_i)$ and $\kS(\mathcal{L}_jf_j)$ are disjoint.

For every $i$ in $\{0,\dots,n\}$, we obtain that 
\begin{equation}\label{eq: division}
\psi(\mathcal{L}_if_i)=-\underset{j\neq i}{\sum_{j=0}^n}\psi(\mathcal{L}_jf_j)
\end{equation}
is a $G$-function with no singularity at finite distance. Indeed, Equation \eqref{eq: division} yields
$$
\kS(\mathcal{L}_if_i)\subseteq\underset{j\neq i}{\bigcup_{j=0}^n}\kS(\mathcal{L}_j f_j).
$$ 
Since the sets $\kS(\mathcal{L}_0 f_0),\dots,\kS(\mathcal{L}_n f_n)$ are pairwise disjoint, we obtain that $\kS(\mathcal{L}_i f_i)$ is empty. 

Now by Lemma \ref{lem: no sing is pol}, a $G$-function with no singularity at finite distance is a polynomial function with coefficients in $\Qbar$. 

Hence, for every $i$ in $\{0,\dots,n\}$, $\psi(\mathcal{L}_if_i)\in\Qbar[z]$ and we also have $\mathcal{L}_if_i\in\Qbar[z]$. As a consequence, we obtain that
$$
\lambda_if_i(\alpha)=(\mathcal{L}_if_i)(\alpha)\in\Qbar.
$$
For all $i$ in $\{1,\dots,n\}$, $f_i(\alpha)$ is transcendental so $\lambda_i=0$. It follows that $\lambda_0=\lambda_1=\cdots=\lambda_n=0$ and the relation is trivial.
\end{proof}

\section{Applications of Theorem \ref{thm: main}}\label{sec: applications}

\subsection{Hypergeometric $E$-functions}

We recall that hypergeometric $E$-functions $\Hyp(z^{s-r})$ are defined by \eqref{eq: Hyp} and that $(a)_n$ denotes the Pochhammer symbol defined by $(a)_0=1$ and $(a)_n=a(a+1)\cdots(a+n-1)$ for positive integers $n$. Since the only singularity at finite distance of a hypergeometric $G$-function is $1$, we obtain the following Diophantine results for their values.
 
\begin{thm}\label{thm: Hyp1}
  For all $i\in\{1,\dots,n\}$, let $s_i>r_i\geq 0$ be integers, set $k_i=s_i-r_i$ and consider parameters $\Ba_i\in\Q^{r_i}$ and $\Bb_i\in(\Q\setminus\Z_{\leq 0})^{s_i}$. Let $\alpha_1,\dots,\alpha_n$ be non-zero algebraic numbers such that, for all $i\neq j$, $F_{\Ba_i,\Bb_i}(\alpha_i)$ is transcendental and $\alpha_i^{k_j}/\alpha_j^{k_i}\neq(k_j/k_i)^{k_i k_j}$. Then the numbers $1,F_{\Ba_1,\Bb_1}(\alpha_1),\dots,F_{\Ba_n,\Bb_n}(\alpha_n)$ are linearly independent over $\Qbar$.
\end{thm}

\begin{proof}
  We consider $f_i(z)=F_{\Ba_i,\Bb_i}(z^{k_i})$ which is a hypergeometric $E$-function. The $G$-series $\psi(f_i)(z)$ is given by
  $$
  \sum_{n=0}^\infty (k_in)!\frac{(a_{i,1})_n\cdots(a_{i,r_i})_n}{(\beta_{i,1})_n\cdots(\beta_{i,s_i})_n}z^{k_in}=\sum_{n=0}^\infty k_i^{k_in} \left(\frac{1}{k_i}\right)_n\cdots\left(\frac{k_i}{k_i}\right)_n\cdot\frac{(a_{i,1})_n\cdots(a_{i,r_i})_n}{(\beta_{i,1})_n\cdots(\beta_{i,s_i})_n}z^{k_in},
  $$
  which is equal to $g_i((k_iz)^{k_i})$ where $g_i$ is a hypergeometric $G$-function with only $1$ as finite singularity. It follows that the singularities at finite distance of $\psi(f_i)$ are of the form $\rho/k_i$ where $\rho$ is a $k_i$-th root of unity. For all $i$, we write $\beta_i$ for a $k_i$-th root of $\alpha_i$. If $\rho_1\in\kS(f_i)$ and $\rho_2\in\kS(f_j)$ are such that $\beta_i/\beta_j=\rho_1/\rho_2$ then there are respective $k_i$-th and $k_j$-th roots of unity $\xi_i$ and $\xi_j$ such that $\beta_i/\beta_j=\xi_ik_j/(\xi_j k_i)$. By rising this equality to the power $k_ik_j$, we obtain that $\alpha_i^{k_j}/\alpha_j^{k_i}=(k_j/k_i)^{k_i k_j}$, which is a contradiction. Since $f_i(\beta_i)=F_{\Ba_i,\Bb_i}(\alpha_i)$ is transcendental, the result follows by Corollary~\ref{cor: LWDn}. 
\end{proof}

By focusing on parameters with same differences $s_i-r_i$, we obtain that the transcendental values of the associated hypergeometric functions are $\Qbar$-linearly independent.

\begin{thm}\label{thm: Hyp2}
  Let $k$ be a positive integer. For all $i\in\{1,\dots,n\}$, let $r_i\geq 0$ and $s_i=r_i+k$ be integers and consider parameters $\Ba_i\in\Q^{r_i}$ and $\Bb_i\in(\Q\setminus\Z_{\leq 0})^{s_i}$. Let $\alpha_1,\dots,\alpha_n$ be pairwise distinct algebraic numbers such that, for all $i$, $1\leq i\leq n$, $F_{\Ba_i,\Bb_i}(\alpha_i)$ is transcendental. Then the numbers $1, F_{\Ba_1,\Bb_1}(\alpha_1),\dots,F_{\Ba_n,\Bb_n}(\alpha_n)$ are linearly independent over $\Qbar$.
\end{thm}

\begin{proof}
The $\alpha_i$'s are non-zero because $F_{\Ba_i,\Bb_i}(\alpha_i)$ is transcendental for all $i$, $1\leq i\leq n$. We set $f_i(z):=F_{\Ba_i,\Bb_i}(z^k)$ for all $i$. The singularities at finite distance of each $\psi(f_i)$ are of the form $\rho/k$ where $\rho$ is a $k$-th root of unity. Let $\beta_1,\dots,\beta_n$ be respective $k$-th roots of $\alpha_1,\dots,\alpha_n$. For all distinct $i$ and $j$, the ratio $\beta_i/\beta_j$ is not a $k$-th root of unity because $\alpha_i\neq\alpha_j$. The result follows by Corollary~\ref{cor: LWDn}.
\end{proof}

The linear and algebraic independence of the values of hypergeometric $E$-functions and their derivatives have been extensively studied. General independence criteria have been obtained by Salikhov \cite{Salikhov90, Salikhov98}, Galochkin \cite{Galochkin11} and recently Gorelov \cite{Gorelov16,Gorelov21}. Those results are stated with some restrictions on the parameters $\Ba_i$ and $\Bb_i$ or under the assumption that the $\alpha_i$'s are linearly independent over $\Q$. As far as we know, the absence of restrictions in our assumptions yields new results.

\subsection{Algebraic pullbacks of $E$-functions} We obtain the following functional result as a straightforward application of Corollary \ref{cor: LWDn}.\label{sec: pull}

\begin{prop}
Let $f_1,\dots,f_n$ be transcendental $E$-functions. Let $\mu_1,\dots,\mu_n,\eta_1,\dots,\eta_n$ be non-zero algebraic functions over $\Qbar(z)$ such that the $\eta_i$'s are non-constant and pairwise linearly independent over $\Qbar$. Then the function
\begin{equation}\label{eq: pull}
\mu_1\cdot (f_1\circ\eta_1)+\cdots+\mu_n\cdot (f_n\circ\eta_n)
\end{equation}
is transcendental over $\Qbar(z)$.
\end{prop}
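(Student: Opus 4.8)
The plan is to argue by contradiction, specializing $z$ at a carefully chosen algebraic point and invoking Corollary~\ref{cor: LWDn}. Suppose the function $h$ in \eqref{eq: pull} is algebraic over $\Qbar(z)$. Then $h$, like each $\mu_i$ and each $\eta_i$, has only finitely many poles and branch points; I would fix a non-empty disc $U\subset\C$ avoiding all of these, together with a single-valued analytic branch of each of $\mu_1,\dots,\mu_n,\eta_1,\dots,\eta_n,h$ on $U$. Since the $f_i$ are entire, $f_i\circ\eta_i$ is then analytic on $U$ and the identity $h=\sum_{i=1}^n\mu_i\cdot(f_i\circ\eta_i)$ holds there.

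Next I would single out a \emph{finite} set $B\subset U$ of forbidden points, namely the union of: the zeros in $U$ of each $\mu_i$ and of each $\eta_i$; the fibres $\eta_i^{-1}(\Exc(f_i))$ for $1\le i\le n$; and, for each pair $i\neq j$, the solutions in $U$ of $\eta_i(z)/\eta_j(z)=\rho_1/\rho_2$ with $\rho_1\in\kS(f_i)$ and $\rho_2\in\kS(f_j)$ (these ratios make sense since $0\notin\kS(f_i)$). Finiteness of $B$ is where the hypotheses enter. Each $\eta_i$ is a non-constant algebraic function, so its zeros in $U$ and the fibres $\eta_i^{-1}(\Exc(f_i))$ are finite; here one uses that $\Exc(f_i)$ is finite because $f_i$ is a \emph{transcendental} $E$-function (its exceptional values being the roots of the denominator in its canonical decomposition). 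Moreover, for $i\ne j$ the algebraic function $\eta_i/\eta_j$ is non-constant, for otherwise $\eta_i$ and $\eta_j$ would be $\Qbar$-linearly dependent, contrary to assumption; hence each equation $\eta_i(z)/\eta_j(z)=\rho_1/\rho_2$, of which there are finitely many, has finitely many solutions.

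I would then pick $z_0\in(U\cap\Qbar)\setminus B$, which exists since $U\cap\Qbar$ is infinite. Writing $\alpha_i:=\eta_i(z_0)$, the choice of $z_0$ yields $\alpha_i\in\Qbar\setminus\{0\}$, $\mu_i(z_0)\in\Qbar\setminus\{0\}$ and $h(z_0)\in\Qbar$ (values of algebraic functions at an algebraic point, away from branch points and poles), and moreover $\alpha_i/\alpha_j\neq\rho_1/\rho_2$ for all $\rho_1\in\kS(f_i)$, $\rho_2\in\kS(f_j)$, $i\neq j$. Hence Corollary~\ref{cor: LWDn} applies to $f_1,\dots,f_n$ and $\alpha_1,\dots,\alpha_n$ and gives that $1,f_1(\alpha_1),\dots,f_n(\alpha_n)$ are linearly independent over $\Qbar$ unless two of them are algebraic. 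But
$$
h(z_0)\cdot 1-\sum_{i=1}^n\mu_i(z_0)\,f_i(\alpha_i)=0
$$
is a non-trivial $\Qbar$-linear relation among them (every $\mu_i(z_0)$ is non-zero), so two of the numbers $1,f_1(\alpha_1),\dots,f_n(\alpha_n)$ must be algebraic; since $1$ is algebraic, this forces $f_i(\alpha_i)\in\Qbar$ for some $i$, hence $\alpha_i\in\Exc(f_i)$ because $\alpha_i\neq0$, contradicting $z_0\notin\eta_i^{-1}(\Exc(f_i))\subseteq B$. Therefore $h$ is transcendental over $\Qbar(z)$.

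I do not expect a genuine obstacle here — as the text announces, this is a direct consequence of Corollary~\ref{cor: LWDn}. The only point requiring care is the bookkeeping with multivalued algebraic functions (choosing $U$ and coherent branches, and making sense of "$h(z_0)$'') together with the verification that the forbidden set $B$ stays finite, which rests precisely on the three standing hypotheses: the $\eta_i$ are non-constant, they are pairwise $\Qbar$-linearly independent, and the $f_i$ are transcendental (so that each $\Exc(f_i)$ is finite).
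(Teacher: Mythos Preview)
Your proof is correct and follows essentially the same route as the paper: argue by contradiction, specialize at algebraic points where the $\mu_i,\eta_i$ are holomorphic and non-vanishing and the $f_i(\eta_i(\cdot))$ are transcendental, and invoke Corollary~\ref{cor: LWDn}. The only cosmetic difference is that you pre-remove the finitely many solutions of $\eta_i/\eta_j=\rho_1/\rho_2$ and then pick a single $z_0$, whereas the paper lets $z_0$ range over infinitely many algebraic points and uses the pigeonhole principle to force $\eta_i/\eta_j$ to be constant; both hinge on the same three finiteness facts (non-constant $\eta_i$, pairwise $\Qbar$-independence of the $\eta_i$, finiteness of $\Exc(f_i)$).
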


\begin{proof}
A non-zero algebraic function $\eta$ has finitely many zeros. If in addition $\eta$ is non-constant, then for each algebraic number $\alpha$, the equation $\eta(z)=\alpha$ has only finitely many solutions. If $\eta_i$ is well defined at $\alpha\in\Qbar$, then $\eta_i(\alpha)\in\Qbar$. Furthermore, $f_i$ has only finitely many exceptional values. Thus there is a non-empty open subset $\mathcal{O}$ of $\C$ such that the $\mu_i$'s and $\eta_i$'s are well defined and do not vanish on $\mathcal{O}$, and for every algebraic number $\alpha\in\mathcal{O}$ and every $i$, $f_i(\eta_i(\alpha))$ is a transcendental number. Assume that the function \eqref{eq: pull} is algebraic over $\Qbar(z)$. Hence, for every algebraic numbers $\alpha\in\mathcal{O}$, we have
$$
\mu_1(\alpha)f_1(\eta_1(\alpha))+\cdots+\mu_n(\alpha)f_n(\eta_n(\alpha))\in\Qbar.
$$
By Corollary \ref{cor: LWDn}, for each such $\alpha$, there are $i\neq j$, $\rho_1\in\kS(f_i)$ and $\rho_2\in\kS(f_j)$ such that $\eta_i(\alpha)/\eta_j(\alpha)=\rho_1/\rho_2$. Since the $f_i$'s are transcendental, the sets $\kS(f_i)$ are finite and non-empty. We obtain by the pigeonhole principle that there exist $i\neq j$, $\lambda\in\Qbar$ and infinitely many $\alpha$ in $\mathcal{O}$ such that $\eta_i(\alpha)/\eta_j(\alpha)=\lambda$. This yields $\eta_i=\lambda\eta_j$ and ends the proof.
\end{proof}

\subsection{A last illustration of Theorem \ref{thm: main}}

Let us consider a last application of Corollary~\ref{cor: LWDn} to the sine integral.

\begin{prop}\label{prop: Si}
  Let $\alpha_1^2,\dots,\alpha_n^2,\beta_1^2,\dots,\beta_n^2$ be pairwise distinct algebraic numbers. Then the numbers
  $$
  \int_{\alpha_i}^{\beta_i}\frac{\sin t}{t}dt,\quad 1\leq i\leq n,
  $$
  are transcendental and linearly independent over $\Qbar$. 
\end{prop}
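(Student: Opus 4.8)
\textit{Setting up the single $E$-function.} The plan is to write each integral as a difference of values of one $E$-function and then invoke the Lindemann--Weierstrass statement of Corollary~\ref{cor: LWD}. Set $\Si(z)=\int_0^z\frac{\sin t}{t}\,\mathrm{d}t$; integrating $\frac{\sin t}{t}=\sum_{k\ge 0}\frac{(-1)^k}{(2k+1)!}t^{2k}$ term by term gives
$$
\Si(z)=\sum_{k=0}^\infty\frac{(-1)^k}{(2k+1)(2k+1)!}\,z^{2k+1}=\sum_{n=0}^\infty\frac{a_n}{n!}z^n,\qquad a_{2k+1}=\frac{(-1)^k}{2k+1},\quad a_{2k}=0 .
$$
Here the $a_n$ are rational with $|a_n|\le 1$ and denominator dividing $\lcm(1,\dots,n)$, and $\Si$ satisfies $z\Si'''+2\Si''+z\Si'=0$, so $\Si$ is an $E$-function. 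Applying $\psi$ term by term, $\psi(\Si)(z)=\sum_{k\ge 0}\frac{(-1)^k}{2k+1}z^{2k+1}=\arctan(z)$, whose only finite singularities are $\pm i$, hence $\kS(\Si)=\{-i,i\}$. I would also record that $\Si$ is \emph{purely transcendental}, i.e.\ $\Si(\gamma)\notin\Qbar$ for every nonzero algebraic $\gamma$; this is the only non-formal input, and I expect it to be the genuine obstacle. It can be certified by the algorithm computing exceptional values of \cite{AR18,BRS22}, or read off from the canonical decomposition of $\Si$ (one checks that the canonical denominator must be even, have simple roots and not vanish at $0$, using $\Si$ odd and $\Si'(\gamma)=\frac{\sin\gamma}{\gamma}\notin\Qbar$).

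\textit{Applying Corollary~\ref{cor: LWD}.} Rewrite $\int_{\alpha_i}^{\beta_i}\frac{\sin t}{t}\,\mathrm{d}t=\Si(\beta_i)-\Si(\alpha_i)$ and apply Corollary~\ref{cor: LWD} to $\Si$ at the $2n$ algebraic points $\alpha_1,\dots,\alpha_n,\beta_1,\dots,\beta_n$. Since $\rho_1/\rho_2\in\{1,-1\}$ for all $\rho_1,\rho_2\in\kS(\Si)$, the hypothesis of the corollary amounts to requiring that no two of these $2n$ numbers have equal squares, which is precisely the assumption. At most one of them can vanish (its square would be $0$, which occurs only once), and that endpoint only contributes $\Si(0)=0$; discarding it leaves nonzero, pairwise distinct algebraic numbers $\gamma_1,\dots,\gamma_m$ with pairwise distinct squares. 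Because $\Si$ is purely transcendental, the ``unless two of them are algebraic'' clause is vacuous, so Corollary~\ref{cor: LWD} gives that $1,\Si(\gamma_1),\dots,\Si(\gamma_m)$ are linearly independent over $\Qbar$.

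\textit{Transferring to the integrals.} If $\alpha_i$ and $\beta_i$ are both nonzero they are distinct (their squares differ), so $\Si(\beta_i)-\Si(\alpha_i)$ is a nontrivial $\Qbar$-combination of two distinct members of this linearly independent family, hence transcendental; if one endpoint is $0$ the integral is $\pm\Si(\gamma)$ for a nonzero algebraic $\gamma$, again transcendental. Likewise, any relation $\sum_i c_i\bigl(\Si(\beta_i)-\Si(\alpha_i)\bigr)=0$ with $c_i\in\Qbar$ becomes, after grouping, a $\Qbar$-linear relation among the $\Si(\gamma_k)$'s in which each $c_i$ appears (up to sign) as the coefficient of $\Si(\gamma)$ for whichever of $\alpha_i,\beta_i$ is nonzero — unambiguously, since the $2n$ endpoints are pairwise distinct — so linear independence forces all $c_i=0$. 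This yields the transcendence and the $\Qbar$-linear independence of the $n$ integrals. The one place the argument is not purely formal is the pure transcendence of $\Si$: without it, both endpoints of some $\int_{\alpha_i}^{\beta_i}$ could be exceptional values and their difference could lie in $\Qbar$, so this is exactly where one must appeal to the effective Hermite--Lindemann theory for $E$-functions and not merely to Corollary~\ref{cor: LWD}.
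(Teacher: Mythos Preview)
Your proposal is correct and follows essentially the same route as the paper: compute $\psi(\Si)=\arctan$ so that $\kS(\Si)=\{-i,i\}$, reduce the hypothesis $\alpha_i/\alpha_j\notin\{\pm1\}$ to the assumption on squares, invoke Corollary~\ref{cor: LWD} at the (nonzero) endpoints, and read off transcendence and $\Qbar$-linear independence of the differences $\Si(\beta_i)-\Si(\alpha_i)$. Your treatment of a possibly vanishing endpoint is in fact more careful than the paper's.

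The only substantive divergence is the justification of the pure transcendence of $\Si$. The paper dispatches this by a direct citation to \cite[Chapter~8]{ShidlovskiiB}. You instead appeal to the algorithms of \cite{AR18,BRS22} or sketch a canonical-decomposition argument. The algorithmic route is legitimate but not a proof on paper; the canonical-decomposition sketch (even denominator, simple roots via $\Si'(\gamma)=\sin\gamma/\gamma\notin\Qbar$) establishes that any exceptional value would be a nonzero simple root of an even $q$, but does not by itself force $q=1$, so that line is incomplete as written. Replacing your parenthetical by the Shidlovskii reference closes the argument with no further work.
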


\begin{proof}
Let us consider the sine integral and its associated $G$-series
$$
\Si(z)=\int_0^z\frac{\sin t}{t}dt\quad\textup{and}\quad \psi(\Si)(z)=\sum_{n=0}^\infty\frac{(-1)^n}{2n+1}z^{2n+1},
$$
which is the Taylor expansion of $\arctan$ at the origin. It follows that $\kS(\Si)=\{-i,i\}$. Let $j$ be fixed in $\{1,\dots,n\}$. By \cite[Chapter 8]{ShidlovskiiB}, the sine integral 
is a purely transcendental $E$-function. Hence, if one of the $\alpha_j$'s or $\beta_j$'s is zero, the corresponding integral is transcendental. Otherwise, we apply Corollary \ref{cor: LWD} to obtain that the numbers $1,\Si(\alpha_j)$ and $\Si(\beta_j)$ are $\Qbar$-linearly independent. In particular, $\Si(\beta_j)-\Si(\alpha_j)$ is a transcendental number, as expected. Now it suffices to apply Corollary \ref{cor: LWD} to obtain that the numbers $\Si(\alpha_1),\dots,\Si(\alpha_n),\Si(\beta_1),\dots,\Si(\beta_n)$ are $\Qbar$-linearly independent.
\end{proof}

\noindent\textbf{Acknowledgements}. We very warmly thank Boris Adamczewski, Tanguy Rivoal and Julien Roques for numerous discussions on various aspects of this project.

\end{document}